\newtheorem{corollary}{\textbf{Corollary}}
\newtheorem{definition}[corollary]{\textbf{Definition}}
\newtheorem{example}[corollary]{\textbf{Example}}
\newtheorem{lemma}[corollary]{\textbf{Lemma}}
\newtheorem{refthm}[corollary]{\textbf{Theorem}}
\newcommand{\rk}{\rm rk\,}
\newcommand{\mor}[3]{$\xymatrix@1@C=15pt{#3: #1\ar[r]& #2}$}
\newcommand{\iso}[3]{$\xymatrix@1@C=15pt{#3: #1\ar[r]^-{\cong}& #2}$}
\newcommand{\psidim}{{\psi\,\rm{dim}}}
\newcommand{\phidim}{{\phi\,\rm{dim}}}
\renewcommand{\mod}{{\rm mod\,}}
\newcommand{\pd}{{\rm pd\,}}
\begin{document}
\thanks{The authors thank the financial supports received from Proyecto FCE-ANII 059 URUGUAY and NSERC Discovery Grants Program CANADA}
\author{ Fran\c cois Huard,\\ Marcelo Lanzilotta}
\sloppy
\bibliographystyle{plain}

\title{Self-injective right artinian rings and Igusa Todorov functions\footnote{\today}}

\begin{abstract}  We show that a right artinian ring $R$ is right self-injective if and only if $\psi(M)=0$ (or equivalently $\phi(M)=0$) for all finitely generated right $R$-modules $M$, where $\psi , \phi : \mod R \to \mathbb N$ are functions defined by Igusa and Todorov. In particular, an artin algebra $\Lambda$ is self-injective if and only if $\phi(M)=0$ for all finitely generated right $\Lambda$-modules $M$.
\end{abstract}

\maketitle

In their paper \cite{IT},  Igusa and Todorov introduce two functions $\phi$ and $\psi$ in order to show that the finitistic dimension of an artin algebra with representation dimension at most three is finite.  It turns out that these invariants also characterise self-injective right artinian rings.  We start by recalling the definitions of $\phi$ and $\psi$.  In what follows, $R$ is a right artinian ring and $\mod R$ is the category of finitely generated right $R$-modules.

Let $K$ be the free abelian group generated by all symbols $[M]$ with $M\in\mod R$ modulo the subgroup generated by:

\begin{itemize}
\item[(a)] $[A]-[B]-[C]$ if $A\cong B\oplus C$,

\item[(b)] $[P]$ if $P$ is projective.
\end{itemize}

\noindent Then $K$ is the free abelian group generated by all isomorphism classes of finitely generated indecomposable non projective $R$-modules. The syzygy functor $\Omega$ then gives rise to a group homomorphism    $\Omega: K \to K$.
For any $M\in\mod R$, let $\langle M\rangle$ denote the subgroup of $K$ generated by all the indecomposable non projective summands of $M$.   Since   the rank of $\Omega (\langle M\rangle)$ is less or equal to the rank of $\langle M\rangle$ which is finite, it follows from the well ordering principle that there exists a non-negative integer $n$ such  that the rank of $\Omega^n(\langle M\rangle)$ is equal to the rank of $\Omega^i(\langle M\rangle)$ for all $i\geq n$.  We let $\phi (M)$ denote the least such $n$.

The main properties of $\phi$ are summarized below.

\begin{lemma}\label{lem:itphi}{\cite{IT,HLM1}}
Let $R$ be a right artinian ring and $M,N\in\mod R$.
\begin{itemize}
\item[(a)]If the projective dimension of $M$, $\pd M$, is finite, then $\pd M=\phi(M)$,
\item[(b)]If $M$ is indecomposable of infinite projective dimension, then $\phi(M)=0$,
\item[(c)]$\phi(N\oplus M)\geq \phi(M)$,
\item[(d)]$\phi(M^k)=\phi(M)$ if $k \geq 1$,
\item[(e)]$\phi(M) \leq \phi(\Omega M) +1$.
\end{itemize}
\end{lemma}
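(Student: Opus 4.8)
The plan is to derive all five statements directly from the definition of $\phi$, leaning on a single structural observation. Since $\Omega\colon K\to K$ is a group homomorphism, the sequence $r_i:=\rk\Omega^i\langle M\rangle$ is non-increasing in $i$ (the image of a finitely generated group has rank at most that of the group), so it stabilizes, and $\phi(M)$ is exactly the least index from which $r_i$ is constant. The elementary tool I would use repeatedly is this: a homomorphism $\Omega$ preserves the rank of a subgroup $G$ (equivalently $\ker(\Omega|_G)$ has rank $0$) precisely when $\rk\Omega(G)=\rk G$, and this property descends to any subgroup $G'\subseteq G$, because $\ker(\Omega|_{G'})=G'\cap\ker(\Omega|_G)$ then also has rank $0$.

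For (a), (b) and (d) I would just read off the rank sequence. Writing $M$ as a sum of indecomposables and recalling that $[\Omega^iN]=0$ in $K$ exactly when $\Omega^iN$ is projective, one gets $[\Omega^iN]\neq0$ for every $i<\pd N$ and $[\Omega^iN]=0$ for $i\geq\pd N$. Hence if $\pd M=d\geq1$ then $r_{d-1}\geq1$ while $r_i=0$ for $i\geq d$, forcing $\phi(M)=d$ (the case $d=0$ being trivial since then $\langle M\rangle=0$); this is (a). If $M$ is indecomposable with $\pd M=\infty$, then $\langle M\rangle=\mathbb Z[M]$ has rank one and every $\Omega^iM$ is non-projective, so $r_i\equiv1$ and $\phi(M)=0$, which is (b). Property (d) is immediate because $M^k$ and $M$ have the same indecomposable non-projective summands, so $\langle M^k\rangle=\langle M\rangle$ and the two rank sequences coincide.

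The substantive steps are (c) and (e), and I expect (c) to be the main obstacle. Here I would use $\langle M\rangle\subseteq\langle M\oplus N\rangle$, whence $\Omega^i\langle M\rangle\subseteq\Omega^i\langle M\oplus N\rangle$ for all $i$. Set $n=\phi(M\oplus N)$. For every $i\geq n$ one has $\rk\Omega^i\langle M\oplus N\rangle=\rk\Omega^{i+1}\langle M\oplus N\rangle$, so $\Omega$ preserves rank on $\Omega^i\langle M\oplus N\rangle$; by the descent remark it therefore preserves the rank of the subgroup $\Omega^i\langle M\rangle$, giving $\rk\Omega^{i+1}\langle M\rangle=\rk\Omega^i\langle M\rangle$ for all $i\geq n$. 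Thus the rank sequence of $\langle M\rangle$ is already constant from $n$ on, so $\phi(M)\leq n=\phi(M\oplus N)$.

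For (e) I would run the same argument one level down, starting from the inclusion $\Omega\langle M\rangle\subseteq\langle\Omega M\rangle$, which holds because $\Omega(M)$ is the direct sum of the syzygies of the summands of $M$. Iterating gives $\Omega^{i+1}\langle M\rangle\subseteq\Omega^i\langle\Omega M\rangle$ for all $i$. Setting $m=\phi(\Omega M)$, for each $i\geq m$ the map $\Omega$ preserves rank on $\Omega^i\langle\Omega M\rangle$, hence on its subgroup $\Omega^{i+1}\langle M\rangle$, so $\rk\Omega^{i+2}\langle M\rangle=\rk\Omega^{i+1}\langle M\rangle$ for all $i\geq m$. The rank sequence of $\langle M\rangle$ is then constant from $m+1$ on, which is exactly $\phi(M)\leq\phi(\Omega M)+1$.
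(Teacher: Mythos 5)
Your proof is correct. The paper states this lemma by citation to \cite{IT,HLM1} rather than proving it, and your argument is essentially the standard one from those sources: the sequence $\rk \Omega^i(\langle M\rangle)$ is non-increasing, properties (a), (b), (d) are read off directly from it, and (c) and (e) follow from the key observation (Igusa--Todorov's Fitting-type lemma) that rank-preservation of $\Omega$ on a finitely generated subgroup of the free group $K$ is equivalent to injectivity there and hence descends to subgroups, applied to the inclusions $\Omega^i(\langle M\rangle)\subseteq \Omega^i(\langle M\oplus N\rangle)$ and $\Omega^{i+1}(\langle M\rangle)\subseteq \Omega^i(\langle \Omega M\rangle)$.
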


The function $\psi:\mod R \to \mathbb N$ is then defined as follows.  For any $M\in\mod R$,

$\psi(M)=\phi(M)+\max\{\pd X | X \hbox{ is a summand of } \Omega^{\phi(M)}M \hbox{ and } \pd X < \infty\}. $

\bigskip

\begin{lemma}\label{lem:itpsi}{\cite{IT,HLM1}}
Let $R$ be a right artinian ring and $M,N\in\mod R$.
\begin{itemize}
\item[(a)]If the projective dimension of $M$ is finite, then $\pd M=\psi(M)$,
\item[(b)]If $M$ is indecomposable of infinite projective dimension, then $\psi(M)=0$,
\item[(c)]$\psi(N\oplus M)\geq \psi(M)$,
\item[(d)]$\psi(M^k)=\psi(M)$ if $k \geq 1$,
\item[(e)]$\psi(M) \leq \psi(\Omega M) +1$,
\item[(f)]If $0\rightarrow A \rightarrow B \rightarrow C \rightarrow 0$ is a short exact sequence in $\mod R$, and $\pd C$ is finite, then $\Psi(C)\leq \Psi(A\oplus B) +1$.
\end{itemize}
\end{lemma}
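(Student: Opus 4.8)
The plan is to derive every item of Lemma~\ref{lem:itpsi} from the definition of $\psi$ together with the already-established properties of $\phi$ in Lemma~\ref{lem:itphi}, treating (a), (b), (d) as immediate, handling (c) and (e) by a single ``shift the maximising summand'' device, and reserving the real work for (f). For (a): if $\pd M<\infty$ then $\phi(M)=\pd M$ by Lemma~\ref{lem:itphi}(a), so $\Omega^{\phi(M)}M$ is projective, the maximum in the definition of $\psi$ is $0$, and $\psi(M)=\pd M$. For (b): if $M$ is indecomposable of infinite projective dimension then $\phi(M)=0$ by Lemma~\ref{lem:itphi}(b) and $\Omega^{0}M=M$ has no summand of finite projective dimension, so $\psi(M)=0$. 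For (d): $\phi(M^{k})=\phi(M)$ by Lemma~\ref{lem:itphi}(d), and $\Omega^{\phi(M)}(M^{k})=(\Omega^{\phi(M)}M)^{k}$ has the same indecomposable summands as $\Omega^{\phi(M)}M$, so the two maxima coincide and $\psi(M^{k})=\psi(M)$.

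For (c) and (e) I would use the following uniform observation. When the maximum is nonzero, write $\psi(M)=\phi(M)+d$ where $X_{0}$ is an indecomposable summand of $\Omega^{\phi(M)}M$ with $\pd X_{0}=d<\infty$ realising it (if the maximum is $0$ both statements are trivial). Since the minimal syzygy commutes with direct sums, $\Omega^{j}X_{0}$ is a summand of $\Omega^{\phi(M)+j}M$ for every $j\ge0$, with $\pd\Omega^{j}X_{0}=d-j$ whenever $j\le d$. In case (c), put $s=\phi(N\oplus M)\ge\phi(M)$ (Lemma~\ref{lem:itphi}(c)) and $j=s-\phi(M)$: if $j\le d$ then $\Omega^{j}X_{0}$ is a finite--projective-dimension summand of $\Omega^{s}(N\oplus M)$ of projective dimension $d-j$, so $\psi(N\oplus M)\ge s+(d-j)=\psi(M)$; if $j>d$ then already $\psi(N\oplus M)\ge\phi(N\oplus M)=s>\phi(M)+d=\psi(M)$. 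In case (e), put $u=\phi(\Omega M)$, use $\phi(M)\le u+1$ (Lemma~\ref{lem:itphi}(e)) and $j=u+1-\phi(M)\ge0$, so that $\Omega^{j}X_{0}$ is a summand of $\Omega^{u}(\Omega M)=\Omega^{\phi(\Omega M)}(\Omega M)$; the same dichotomy gives $\psi(\Omega M)\ge\psi(M)-1$, i.e. $\psi(M)\le\psi(\Omega M)+1$.

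The crux is (f). Since $\pd C<\infty$, item (a) reduces the claim to $\pd C\le\psi(A\oplus B)+1$, and the cases $\pd C\le1$ are trivial because $\psi\ge0$. The starting point I would use is the exact sequence $0\to\Omega C\to A\oplus P_{C}\to B\to 0$ obtained by lifting the projective cover $P_{C}\to C$ through the surjection $B\to C$ and taking the induced kernel; applying the horseshoe lemma repeatedly turns it into exact sequences $0\to\Omega^{k+1}C\to\Omega^{k}A\oplus P^{(k)}\to\Omega^{k}B\to 0$ for every $k\ge0$, with each $P^{(k)}$ projective. Writing $n=\phi(A\oplus B)$ and $t=\psi(A\oplus B)-n$, the target becomes $\pd\Omega^{n}C\le t+1$.

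The essential point, and the step I expect to be the main obstacle, is a bookkeeping argument carried out in the rank-stabilised range $k\ge n$. There $\Omega$ is injective on $\Omega^{n}\langle A\oplus B\rangle$, since equal ranks in the torsion-free group $K$ force a trivial kernel; the finite--projective-dimension indecomposable summands of $\Omega^{n}(A\oplus B)$ all have projective dimension at most $t$; and the infinite--projective-dimension summands are permuted injectively by $\Omega$, hence cannot be created or destroyed by the maps in the sequences above. Matching the summands of $\Omega^{k}A$ and $\Omega^{k}B$ across those sequences should force the finite-dimensional ``excess'', which is precisely $\Omega^{n}C$, to be annihilated after at most $t+1$ further syzygies; keeping this matching honest when $A$ and $B$ themselves have infinite projective dimension (so that naive projective-dimension inequalities on the sequences are vacuous) is exactly where the delicacy lies, and is the reason a plain induction on $\pd C$ fails. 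That the constant $+1$ is sharp is already visible when $B$ is projective: then $A\cong\Omega C$, so $\psi(A\oplus B)=\psi(\Omega C)=\pd C-1$ by (a), and $\psi(C)=\psi(A\oplus B)+1$. \cqfd
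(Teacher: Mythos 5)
The paper itself gives no proof of this lemma --- it is quoted from \cite{IT,HLM1} --- so your attempt has to be measured against the arguments in those sources. Your parts (a), (b) and (d) are correct and are the standard observations (in (b) you rely, as the sources do, on the convention that the maximum over the empty set is $0$). Your uniform device for (c) and (e) --- take an indecomposable summand $X_0$ of $\Omega^{\phi(M)}M$ realising the maximum, push it forward by $\Omega^{j}$ so that it becomes a summand of $\Omega^{\phi(N\oplus M)}(N\oplus M)$, resp.\ of $\Omega^{\phi(\Omega M)}(\Omega M)$, and split into the cases $j\le d$ and $j>d$ --- is sound (minimal syzygies commute with direct sums over a right artinian ring, and $\Omega^{j}X_0\neq 0$ when $j\le d=\pd X_0$) and is essentially how \cite{IT,HLM1} argue, since only \emph{lower} bounds on the maximum in the definition of $\psi$ are needed there.

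Part (f), however, is not proved: you outline a plan and yourself flag its central step as an ``obstacle,'' and that step is a genuine gap --- indeed the structural claim it rests on is false. Stability of the rank of $\Omega^{k}\langle A\oplus B\rangle$ for $k\ge\phi(A\oplus B)$ constrains only the subgroup of $K$ generated by the elements $[\Omega^{k}A_i]$, not the decomposition of the modules $\Omega^{k}(A\oplus B)$ into indecomposables; it does \emph{not} follow that ``the infinite--projective-dimension summands are permuted injectively by $\Omega$, hence cannot be created or destroyed.'' An indecomposable $Y$ of infinite projective dimension can satisfy $\Omega Y\cong Z_1\oplus Z_2$ with $\pd Z_1<\infty$ and $\pd Z_2=\infty$: then the rank of $\Omega^{i}\langle Y\rangle$ is $1$ for all $i$ (the element $[\Omega^{i}Y]$ never vanishes), so $\phi(Y)=\psi(Y)=0$, yet a non-projective finite--projective-dimension summand of arbitrarily large $\pd$ appears at level $1>\phi(Y)$. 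So ``matching summands'' in the stabilised range cannot be made honest, and in fact the reduced configuration your plan arrives at --- an exact sequence $0\to Z\to Z\oplus P\to D\to 0$ with $P$ projective and all summands of $Z$ of infinite projective dimension --- does not formally bound $\pd D$ at all. The actual proof in \cite{IT} works in the group $K$ rather than with indecomposable summands: from the horseshoe lemma one gets exact sequences $0\to\Omega^{k}A\to\Omega^{k}B\oplus P_k\to\Omega^{k}C\to 0$; at $k=\pd C$ the quotient is projective, so the sequence splits and $[\Omega^{\pd C}A]=[\Omega^{\pd C}B]$ in $K$; injectivity of $\Omega$ on $\Omega^{i}\langle A\oplus B\rangle$ for $i\ge\phi(A\oplus B)$ (a rank-preserving surjection of finitely generated free abelian groups is injective) then pulls this identity down to level $\phi(A\oplus B)$, and only then is the maximum $t=\psi(A\oplus B)-\phi(A\oplus B)$ brought in to bound $\pd\Omega^{\phi(A\oplus B)}C$. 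Your closing sharpness remark (the case $B$ projective) is correct but does not substitute for the missing argument; note also the harmless misprint $\Psi$ for $\psi$ in the statement.
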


We introduce the natural concepts of $\phi$-dimension and  $\psi$-dimension for a right artinian ring $R$.

\begin{definition} For a right artinian ring $R$, $\phidim(R)=\sup\{\phi(M) | M \in\mod R\} $ and
$\psidim(R)=\sup\{\psi(M) | M \in\mod R\} .$
\end{definition}

Another invariant for $R$ is its finitistic dimension, fin.dim$(R)$,  defined as  the supremum of the projective dimensions of the finitely generated right $R$-modules of finite projective dimension (see \cite{ZH}).  It follows from Lemma \ref{lem:itphi}(a) and Lemma \ref{lem:itpsi}(a) that if $\phidim R$ or $\psidim R$ is finite, then the finitistic dimension of $R$ is also finite.

 Recall that a right artinian ring $R$ is right self-injective if the module $R_R$ is injective.  Note that every indecomposable module over a right self-injective right artinian ring is either projective or has infinite projective dimension.  However, this property does not characterize right self-injective rings.

 \begin{example}

Consider $\Lambda$ the bound quiver algebra $kQ/J^2$ where $k$ is a field, $J$ is the ideal of $KQ$ generated by the arrows and $Q$ is given by

$$ \xymatrix{1\ar[r]& 2\ar[r]& 3\ar[r] & \cdots \ar[r] & n-1\ar[r] &n \ar@(ur,dr)   }$$

\bigskip

\noindent  In this case, the projective dimension of each finitely generated right $\Lambda$-module is either zero or infinite.  Therefore the finitistic dimension of $R$ is equal to zero.  However, $\psidim (\Lambda) =\phidim (\Lambda)=\phi(S_1 \oplus S_n)=n-1$ where $S_1$ and $S_n$ denote the simple modules at the vertices $1$ and $n$ respectively.  Note that $\Lambda$ is not self-injective since the indecomposable projective at the vertex $n$ is not injective.

\end{example}

For any right artinian ring $R$, we have fin.dim$(R)\leq \phidim (R) \leq$ gl.dim$(R)$, where gl.dim$(R)$ denotes the global dimension of $R$.  As the example above shows, these inequalities can be strict.  We can now state and prove our main result.

\begin{refthm}  For a right artinian ring $R$, the following are equivalent.
\begin{itemize}
\item[(a)]$\phidim(R)=0$.
\item[(b)]$\psidim(R)=0$.
\item[(c)]$R$ is right self-injective.
\end{itemize}
\end{refthm}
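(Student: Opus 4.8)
The plan is to establish the two nontrivial implications (a) $\Rightarrow$ (c) and (c) $\Rightarrow$ (a), after disposing of (a) $\Leftrightarrow$ (b) by a short computation. For (b) $\Rightarrow$ (a), the definition of $\psi$ gives $\psi(M)\geq\phi(M)$ for every $M$, so $\psidim(R)=0$ forces $\phidim(R)=0$. For (a) $\Rightarrow$ (b), if $\phidim(R)=0$ then Lemma~\ref{lem:itphi}(a) shows every module of finite projective dimension is projective; hence the finite-projective-dimension summands of $M=\Omega^{0}M$ are all projective, the maximum in the definition of $\psi(M)$ is $0$, and $\psi(M)=\phi(M)=0$. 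Thus it suffices to prove (a) $\Leftrightarrow$ (c). The reformulation I will exploit is that $\phidim(R)=0$ is equivalent to the syzygy homomorphism $\Omega\colon K\to K$ being injective: the ranks $\mathrm{rk}\,\Omega^{i}\langle M\rangle$ are nonincreasing in $i$, so $\phi(M)=0$ for all $M$ says exactly that $\mathrm{rk}\,\Omega\langle M\rangle=\mathrm{rk}\,\langle M\rangle$ for all $M$; taking $M$ to be the direct sum of the indecomposables occurring in a hypothetical nonzero element of $\Ker\Omega$ shows this holds for every $M$ precisely when $\Omega$ has trivial kernel.

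For (c) $\Rightarrow$ (a), assume $R$ is right self-injective, so every projective right module is injective. For an indecomposable nonprojective $M$ the minimal presentation $0\to\Omega M\to P(M)\to M\to 0$ exhibits the injective module $P(M)$ as containing $\Omega M$, whence $\Omega^{-1}\Omega M\cong M$ in $\underline{\mod}\,R$; dually, from $0\to M\to E(M)\to\Omega^{-1}M\to 0$ with $E(M)$ projective one gets $\Omega\Omega^{-1}M\cong M$. Thus $\Omega$ and the cosyzygy $\Omega^{-1}$ are mutually inverse on the stable category, so $\Omega$ sends indecomposable nonprojectives to indecomposable nonprojectives bijectively. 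Consequently $\Omega$ is an automorphism of $K$, each $\mathrm{rk}\,\Omega^{i}\langle M\rangle$ equals $\mathrm{rk}\,\langle M\rangle$, and $\phi(M)=0$ for all $M$.

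The heart of the argument is (a) $\Rightarrow$ (c), which I will prove in contrapositive form: if $R$ is not right self-injective I produce a nonzero element of $\Ker\Omega$. Since $R_R$ is not injective, some indecomposable projective summand $P$ of $R_R$ is not injective, and by the Baer criterion applied along a composition series there is a simple module $S$ with $\Ext{^{1}(S,P)}\neq 0$. Choose a nonsplit extension $0\to P\to E\to S\to 0$. Because $P$ is projective, the standard construction gives a projective presentation $0\to\Omega S\to P\oplus P(S)\to E\to 0$; comparing it with a minimal presentation of $E$, Schanuel's lemma yields $[\Omega S]=[\Omega E]$ in $K$, that is $\Omega([S]-[E])=0$. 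It remains to see that $[S]-[E]\neq 0$: equality would force the nonprojective part of $E$ to be a single copy of $S$, i.e. $E\cong S\oplus(\text{projective})$, which splits the extension and is excluded. Hence $[S]-[E]$ is a nonzero kernel element, and by the reformulation above $\phi(S\oplus E)\geq 1$, so $\phidim(R)>0$.

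I expect the main obstacle to be precisely this last step. A naive attempt notes that an indecomposable injective nonprojective module can never be a summand of a syzygy, and so lies outside the image of $\Omega$; but that only shows non-self-injectivity makes $\Omega$ \emph{nonsurjective}, whereas $\phidim(R)=0$ is controlled by the \emph{injectivity} of $\Omega$, and on the possibly infinite-rank group $K$ these are not formally equivalent. The real content is therefore that non-self-injectivity must produce a genuine collision of syzygies rather than a mere gap in the image, and the device achieving this is the Schanuel comparison above, which converts the failure of injectivity of $R_R$ into the relation $[\Omega S]=[\Omega E]$. A secondary point needing care is the reduction of injectivity of $P$ to the vanishing of $\Ext{^{1}(S,P)}$ on simple modules, which relies on $R$ being artinian.
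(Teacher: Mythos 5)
Your argument is correct, and on the crucial implication (a) $\Rightarrow$ (c) it takes a genuinely different route from the paper. The paper proceeds in two stages: it first shows, via rank-drop arguments on exact sequences built from socle elements, that every indecomposable projective $P$ has simple socle; it then compares a non-injective $P$ with its injective hull $I$ (which, as the paper notes, need not be finitely generated), lifts a simple $S$ from the socle of $I/P$ to a finitely generated extension $0\to P\to U\to S\to 0$ whose middle term is indecomposable (this is where the simple-socle step is used), and extracts $[\Omega U]=[\Omega S]$ from a snake-lemma diagram, contradicting $\phi(U\oplus S)=0$. You manufacture the same kind of syzygy collision more directly: Baer's criterion plus induction along composition series yields a simple $S$ with $\Ext{^1(S,P)}\neq 0$; pulling back a projective cover of $S$ along a nonsplit extension $0\to P\to E\to S\to 0$ and applying Schanuel gives $\Omega([S]-[E])=0$; and freeness of $K$ on the indecomposable nonprojectives shows $[S]-[E]\neq 0$ with no need for $E$ to be indecomposable. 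This buys you: no socle analysis, no injective hulls (so no care needed about their possible infinite generation), and the clean isolation of the principle that $\phidim(R)=0$ is exactly injectivity of $\Omega$ on $K$ --- your closing remark that non-self-injectivity must produce a kernel element rather than a mere gap in the image is precisely the right diagnosis, and it is implicit but never stated in the paper. What the paper's longer route buys is the structural byproduct that indecomposable projectives have simple socle, and an argument that never leaves explicit diagrams. Two compressed steps in your write-up each deserve a line: the claim that $E\cong S\oplus Q$ with $Q$ projective ``splits the extension'' needs a small case split (if the summand $S$ maps nonzero to the quotient $S$, Schur's lemma gives a section, so the sequence splits; if it maps to zero, then $S\subseteq\Ker{(E\to S)}=P$ and the modular law makes $S$ a direct summand of the indecomposable $P$, forcing $S=P$ projective, contradicting $\Ext{^1(S,P)}\neq 0$); and in (c) $\Rightarrow$ (a), to keep $\Omega^{-1}$ inside $\mod R$ you should note that over a right self-injective right artinian (quasi-Frobenius) ring the injective hull of a finitely generated module is finitely generated, since the indecomposable injectives are the indecomposable summands of $R_R$ --- the paper sidesteps this by reading the projective resolution itself as an injective coresolution, and your (c) $\Rightarrow$ (a) is otherwise the paper's argument in stable-category language. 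Your treatment of (a) $\Leftrightarrow$ (b) matches the paper's.
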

\begin{proof}

Clearly, (b) implies (a).  On the other hand, if $\phidim(R)=0$, then for each $R$-module $M$, either $\pd M=0$ or $\pd M=\infty$.  Thus $\phi(M)=\psi(M)$ for all modules $M$ and hence $\psidim(R)=0$.

We will now prove that (a) and (c) are equivalent.  Assume that $\phidim(R)=0$.  We start by showing that each indecomposable projective has simple socle. Let $P$ be an indecomposable projective module and assume that $P$ has two nonisomorphic simples $S_1$ and $S_2$ in its socle.  This yields the short exact sequences:

\smallskip

$0\rightarrow S_1 \rightarrow P \rightarrow M_1 \rightarrow 0,$

\smallskip

$0\rightarrow S_2 \rightarrow P \rightarrow M_2 \rightarrow 0,$

\smallskip

$0\rightarrow S_1\oplus S_2 \rightarrow P \rightarrow M_3 \rightarrow 0,$

\smallskip

\noindent with $M_1$, $M_2$, $M_3$ nonisomorphic indecomposable (since they have simple top) modules.  But then the rank of $\langle M_1 \oplus M_2 \oplus M_3\rangle$ is 3 while the rank of $\Omega(\langle M_1 \oplus M_2 \oplus M_3\rangle)$ is 2, implying that $\phi(M_1\oplus M_2 \oplus M_3) \geq 1$, a contradiction. Assume now that $P$ has two isomorphic simples $S$ in its socle.  This yields the short exact sequences:

\smallskip

$0\rightarrow S \rightarrow P \rightarrow M_1 \rightarrow 0,$

\smallskip

$0\rightarrow S\oplus S \rightarrow P \rightarrow M_2 \rightarrow 0,$

\smallskip

\noindent with $M_1$ and $M_2$ nonisomorphic indecomposable (since they have simple top) modules. The rank of $\langle M_1 \oplus M_2\rangle$ is 2 while the rank of $\Omega(\langle M_1 \oplus M_2\rangle)$ is 1, thus $\phi(M_1\oplus M_2) \geq 1$, a contradiction.
Thus each indecomposable projective module has simple socle.

Given an indecomposable projective $P\in\mod R$, let $I$ be its injective hull.  Since $P$ has simple socle, $I$ must be indecomposable.  Note that since $R$ is right artinian, $I$ is not necessarily finitely generated.  We will show that $P$ is injective.  If not, we have the following commutative diagram

$$\xymatrix{
 0 \ar[r] & P \ar[d] \ar[r]& U \ar[d]\ar[r]& S\ar[d] \ar[r]  & 0\\
 0 \ar[r] &  P\ar[r] & I \ar[r] & C \ar[r] & 0      }$$

 \bigskip

\noindent where $C\neq 0$, $S$ is a simple $R$-module lying in the socle of $C$, and the upper sequence is obtained by lifting the monomorphism $S\to C$.    Note that since $P$ and $S$ are finitely generated, so is $U$.  Moreover,  the map $U\to I$ is a monomorphism  and hence $U$ is indecomposable since it has simple socle. This implies that $S$ is not projective, and that a fortiori neither is $U$ since otherwise we would have $\pd S=1=\phi(S)$, a contradiction.  Let $P(S)$ be the projective cover of $S$. We have the following commutative diagram

$$\xymatrix{ {} & {} & 0 \ar[d] & 0\ar[d] & {}\\
{} & 0 \ar[d] & \Omega(U)\oplus P'' \ar[d] \ar[r]_\cong & \Omega(S) \ar[d]  & {}\\
 0 \ar[r] & P \ar[d] \ar[r]& P' \ar[d]\ar[r]& P(S)\ar[d] \ar[r]  & 0\\
 0 \ar[r] &  P\ar[d] \ar[r] & U \ar[d]\ar[r] & S \ar[d]\ar[r] & 0\\
 {} & 0 & 0 & 0 & {}       }$$

\bigskip

\noindent where the isomorphism follows from the snake lemma.  In $K$, we have
$[\Omega(S)]=[\Omega(U) \oplus P'']=[\Omega(U)]$.  Therefore, since $U$ and $S$ are indecomposable, we have $\Omega(\langle U\rangle)= \Omega(\langle S\rangle)$, so that $\Omega(\langle U\oplus S\rangle)= \Omega(\langle U\rangle)+\Omega(\langle S\rangle)=\Omega(\langle S\rangle)$.  Now $S$ is not a summand of $U$ and neither of them are projective hence we have $\rk \langle U\oplus S\rangle > \rk \langle S \rangle = \rk  \Omega(\langle S \rangle) =\rk \Omega(\langle U\oplus S\rangle)$ where the first equality follows from the hypothesis that $\phidim(R)=0$. Thus $\phi(U\oplus S)>0$, a contradiction.  Hence $P$ is injective.  Since this holds for every indecomposable $R$-projective $P$, $R$ is right self-injective.

Assume now that $R$ is right self-injective and  let $M\in\mod R$.  Since we wish to compute $\phi(M)$, we can assume that all summands of $M$ are non projective.  Hence for each  summand $M'$ of $M$, $\pd M'=\infty$.  Let $M_1, M_2$ be direct summands of $M$.  We claim that $\Omega^n M_1 \cong \Omega^n M_2 \iff M_1 \cong M_2$.  Indeed, if we consider the n-th sysygy of $M_1$ and $M_2$,
we have

\smallskip

$0\rightarrow \Omega^nM_1\rightarrow P_{n-1} \rightarrow P_{n-2} \rightarrow \ldots\rightarrow P_0\rightarrow M_1\rightarrow 0,$

\smallskip

$0\rightarrow \Omega^nM_2\rightarrow Q_{n-1} \rightarrow Q_{n-2} \rightarrow \ldots\rightarrow Q_0\rightarrow M_2\rightarrow 0,$

\smallskip

\noindent where the $P_i$'s and the $Q_i$'s are projective and hence injective $R$-modules.  The given resolutions are then injective resolutions. Consequently if $\Omega^n M_1 \cong \Omega^n M_2$, then $M_1\cong \Omega^{-n}(\Omega^n M_1) \cong \Omega^{-n}(\Omega^n M_2)\cong M_2$.  But then $\rk \langle M \rangle = \rk \Omega^i(\langle M \rangle)$ for each $i\geq 0$, implying that $\phi(M)=0$.  Since this holds for all $M\in\mod R$, we showed that $\phidim(\Lambda)=0$.
\end{proof}

Recall that a ring $R$ is self-injective if $_RR$ and $R_R$ are injective left and right $R$-modules respectively. Using the duality of an artin algebra and the fact that the  number of isomorphism classes of indecomposable projective and indecomposable injective modules are the same, the following corollary is immediate.

\begin{corollary} Let $\Lambda$ be an artin algebra.  Then $\Lambda$ is self-injective if and only if $\phidim(\Lambda)=0$.
\end{corollary}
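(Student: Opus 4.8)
The plan is to deduce the corollary from the main theorem together with the self-duality available for artin algebras. Since an artin algebra $\Lambda$ is in particular a right artinian ring, the theorem applied to $R=\Lambda$ already gives that $\phidim(\Lambda)=0$ if and only if $\Lambda_\Lambda$ is injective, that is, $\Lambda$ is right self-injective. Thus the only thing left to establish is that for an artin algebra, being right self-injective is equivalent to being self-injective (both $_\Lambda\Lambda$ and $\Lambda_\Lambda$ injective). One direction is immediate, since self-injective trivially implies right self-injective, so the real content is to show that right self-injectivity forces left self-injectivity.

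For this I would first set up the counting input: over an artin algebra the number of isomorphism classes of indecomposable projective right modules equals the number of simple right modules, which in turn equals the number of isomorphism classes of indecomposable injective right modules, each of the latter arising as the injective envelope of a simple. Now suppose $\Lambda_\Lambda$ is injective. Then every indecomposable projective right module is injective; these are pairwise non-isomorphic, so by the equality of the two counts the classes of indecomposable projective and of indecomposable injective right modules must coincide.

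Finally I would invoke the standard duality $D:\mod\Lambda\to\mod\Lambda^{\mathrm{op}}$, a contravariant equivalence that interchanges indecomposable projective and indecomposable injective modules and matches the two sides of $\Lambda$. Applying $D$ to the conclusion ``indecomposable projective right modules $=$ indecomposable injective right modules'' yields ``indecomposable injective left modules $=$ indecomposable projective left modules'', whence every indecomposable projective left module is injective, i.e. $_\Lambda\Lambda$ is injective. Combined with the theorem this closes the chain $\Lambda \text{ self-injective} \iff \Lambda \text{ right self-injective} \iff \phidim(\Lambda)=0$. The main obstacle is precisely this bridge from one-sided to two-sided injectivity; the counting argument is what makes the exchange of projectives and injectives under $D$ applicable, and once that is in place the equivalence is purely formal.
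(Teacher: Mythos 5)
Your proposal is correct and follows exactly the route the paper indicates: apply the main theorem to get $\phidim(\Lambda)=0 \iff \Lambda$ right self-injective, then upgrade to two-sided self-injectivity via the duality $D$ together with the equality of the numbers of isomorphism classes of indecomposable projective and indecomposable injective modules. You have simply written out in full the counting-plus-duality argument that the paper leaves as ``immediate,'' so there is nothing to correct.
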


\vskip3mm \noindent Fran\c cois Huard:\\ Department of mathematics,
Bishop's University,\\ Sherbrooke, Qu\'ebec, CANADA,  J1M1Z7.\\
{\tt fhuard@ubishops.ca}

\vskip3mm \noindent Marcelo Lanzilotta:\\ Instituto de Matem\'atica y Estad\'{i}stica Rafael Laguardia,\\
J. Herrera y Reissig 565, Facultad de Ingenier\'{i}a, Universidad de la Rep\'ublica. CP 11300, Montevideo, URUGUAY.\\
{\tt marclan@fing.edu.uy}

\end{document}